\documentclass[11pt,a4paper]{amsart}

\usepackage{amsmath,amsfonts,amsthm}

\newtheorem{lemma}{Lemma}

\title{Another Proof of a Lemma by L. Shepp}
\author{Tomas Persson}
\address{Tomas Persson, Centre for Mathematical Sciences, Lund University, Box 118, 22100 Lund, Sweden}
\email{tomasp@maths.lth.se}
\urladdr{http://www.maths.lth.se/~tomasp}

\subjclass[2010]{26D15}

\begin{document}
\date{\today}
\begin{abstract}
  We give a new proof of a lemma by L. Shepp, that was used in
  connection to random coverings of a circle.
\end{abstract}

\maketitle

Consider a circle of circumference $1$, and a sequence $l_n$ of number
in $(0,1)$. We try to cover the circle by tossing arcs of length $l_n$
on the circle. In \cite{Shepp}, L. Shepp proved that if the arcs are
tossed independently and uniformly distributed on the circle, then the
circle is covered with probability one, if and only if $\sum n^{-2}
e^{l_1 + \cdots + l_n}$ diverges.  In the proof of this result, Shepp
used the following lemma.

\begin{lemma} \label{lem:shepp}
Let $l_n$ be a decreasing sequence of numbers in $(0,1)$, and\/ $0 <
\varepsilon < 1 - l_1$. Suppose that $\sum l_n^2$ diverges. Then
\[
\limsup_{n\to\infty} \int_0^\varepsilon \prod_{k=1}^n \frac{1 - l_k -
  \min\{l_k, t\}}{(1 - l_k)^2} \, \mathrm{d} t = \infty.
\]
\end{lemma}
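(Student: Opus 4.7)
My plan is to exploit the probabilistic origin of the integrand and to conclude via a second-moment (Paley--Zygmund) argument. I would place $n$ independent, uniformly random arcs of lengths $l_1, \ldots, l_n$ on the circle $\mathbb{R}/\mathbb{Z}$, write $V_n$ for the (random) complement of their union, and set $p_n = \prod_{k=1}^n (1 - l_k)$. The key first step is a direct case analysis showing that, for any fixed $t \in (0, 1 - l_k)$, the probability that a single uniformly placed arc of length $l_k$ misses both $0$ and $t$ is exactly $1 - l_k - \min\{l_k, t\}$: the two ``bad'' sets for the left endpoint are disjoint when $t \geq l_k$ and overlap in a set of length $l_k - t$ otherwise. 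Combined with independence and translation invariance, this identifies the integrand as the two-point correlation ratio
\[
  \prod_{k=1}^n \frac{1 - l_k - \min\{l_k, t\}}{(1 - l_k)^2} \;=\; \frac{P(\{0, t\} \subset V_n)}{P(0 \in V_n)\, P(t \in V_n)}.
\]

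With this identity in hand, I would set $W_n = \lambda(V_n \cap [0, \varepsilon])$ (with $\lambda$ Lebesgue measure) and let $J_n$ denote the integral in the lemma. Stationarity gives $E[W_n] = \varepsilon p_n$, while Fubini together with the substitution $u = t - s$ yields
\[
  E[W_n^2] \;=\; 2 p_n^2 \int_0^\varepsilon (\varepsilon - u) \prod_{k=1}^n \frac{1 - l_k - \min\{l_k, u\}}{(1 - l_k)^2} \, \mathrm{d}u \;\leq\; 2 \varepsilon p_n^2 J_n.
\]
Cauchy--Schwarz in the form $E[W_n]^2 = E[W_n \mathbf{1}_{\{W_n > 0\}}]^2 \leq E[W_n^2]\, P(W_n > 0)$ then simplifies to $J_n \geq \varepsilon/(2 P(W_n > 0))$.

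To close the argument, I must show that $P(W_n > 0) \to 0$. Since $V_{n+1} \subseteq V_n$, the sequence $W_n$ is non-increasing with almost-sure limit $W_\infty = \lambda(V_\infty \cap [0, \varepsilon])$, where $V_\infty = \bigcap_n V_n$; moreover $E[W_\infty] = \varepsilon \prod_{k=1}^\infty (1 - l_k)$. The hypothesis $\sum l_k^2 = \infty$, together with $l_k \in (0,1)$ (so $l_k \geq l_k^2$), forces $\sum l_k = \infty$ and hence $\prod(1 - l_k) = 0$; therefore $W_\infty = 0$ almost surely, and dominated convergence gives $P(W_n > 0) \downarrow 0$. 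Combined with the lower bound above, this yields $J_n \to \infty$, which is in fact stronger than the asserted $\limsup = \infty$. I expect the least routine step to be the first one, the exact identification of the integrand as a two-point correlation ratio via the formula $1 - l_k - \min\{l_k, t\}$ (which, reassuringly, is an equality rather than an inequality because the hypothesis $\varepsilon < 1 - l_1$ keeps the two ``bad'' arcs from wrapping past each other on the circle); once that geometric input is in place, everything else is standard second-moment bookkeeping.
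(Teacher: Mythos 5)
Your reduction to $J_n \geq \varepsilon/(2\,P(W_n>0))$ is correct and cleanly executed: the identification of the integrand as the two-point correlation ratio is right (and the hypothesis $\varepsilon<1-l_1$ is indeed exactly what prevents wrap-around), and the moment computations and the Cauchy--Schwarz step are fine. The proof breaks at the very last step, however. From $W_\infty=0$ almost surely you cannot conclude $P(W_n>0)\to 0$. The events $\{W_n>0\}$ decrease to $\bigcap_n\{W_n>0\}$, the event that \emph{every finite stage} leaves a set of positive measure uncovered, and this event strictly contains $\{W_\infty>0\}$: the uncovered sets $V_n\cap[0,\varepsilon]$ can all have positive measure while their intersection has measure zero (think of $W_n=1/n$ deterministically). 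Dominated convergence gives $E[W_n]\to E[W_\infty]$, not convergence of $P(W_n>0)$, because $\mathbf{1}_{\{x>0\}}$ is discontinuous precisely at the relevant point $x=0$. The inference is not just unjustified but false in general: your argument for this step uses only $\sum l_k=\infty$ (via $\prod(1-l_k)=0$), yet for $l_k=c/k$ with $c$ small one has $\sum l_k=\infty$ and $W_\infty=0$ a.s., while by Shepp's covering criterion the interval remains partially uncovered with positive probability, so $P(W_n>0)$ does \emph{not} tend to $0$.

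What you actually need at that point is that $\sum l_k^2=\infty$ forces $[0,\varepsilon]$ to be covered almost surely --- and that is essentially the statement Shepp introduced this lemma in order to prove, so the argument as written is circular (or, read charitably, has its entire difficulty concentrated in an unproved claim). Notice that the hypothesis $\sum l_k^2=\infty$ never enters your proof except through the strictly weaker $\sum l_k=\infty$; that is a reliable sign something is missing. This is also precisely the situation Shepp alluded to when he wrote that a direct proof ``seems difficult'': his own route was probabilistic, and the point of the present paper is to avoid the probability altogether. The paper's proof instead applies a Chebyshev-type inequality, $\varepsilon^{n-1}\int_0^\varepsilon\prod_k f_k\geq\prod_k\int_0^\varepsilon f_k$ for positive functions of the same monotonicity, and then shows by a Taylor expansion that $\varepsilon^{-1}\int_0^\varepsilon f_k = 1+\frac{1-2\varepsilon}{2\varepsilon}l_k^2+o(l_k^2)$, so that the logarithm of the product diverges exactly when $\sum l_k^2$ does. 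If you want to salvage your approach, you must supply an independent proof that $P(W_n>0)\to 0$ under $\sum l_k^2=\infty$; that is a genuinely nontrivial covering statement, not a consequence of $E[W_\infty]=0$.
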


Shepp's proof of this lemma is based on some related considerations of
probabilities, and he wrote that ``It seems difficult to prove
directly that $\sum l_n^2 = \infty$ implies that $\ldots$ holds.''
This induced T. Kaijser to look for a simple and direct proof, which
indeed he found in \cite{Kaijser}. In this note we provide yet another
proof. In fact, we shall prove the somewhat stronger statement that
\begin{equation} \label{eq:diverges}
  \lim_{n\to\infty} \int_0^\varepsilon \prod_{k=1}^n \frac{1 - l_k -
    \min\{l_k, t\}}{(1 - l_k)^2} \, \mathrm{d} t = \infty.
\end{equation}

We will make use of the following sometimes very useful inequality.
It might be well-known to the reader, but we provide nevertheless a
proof.

\begin{lemma} \label{lem:inequality}
  Let $f_1, \ldots, f_n$ be positive functions, that are either all
  increasing or all decreasing. Then
  \[
  \varepsilon^{n-1} \int_0^\varepsilon \prod_{k=1}^n f_k (x) \,
  \mathrm{d} x \geq \prod_{k=1}^n \int_0^\varepsilon f_k (x) \,
  \mathrm{d} x.
  \]
\end{lemma}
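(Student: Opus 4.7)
The plan is to recognise this as a multi-factor version of Chebyshev's integral inequality and to prove it by induction on $n$, with the two-function Chebyshev inequality as the base case.

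For the base case $n = 2$, I would use the standard symmetrisation trick: if $f_1, f_2$ are both monotone in the same direction, then $(f_1(x) - f_1(y))(f_2(x) - f_2(y)) \geq 0$ for all $x,y \in [0,\varepsilon]$. Integrating this inequality over $[0,\varepsilon]^2$ and expanding gives
\[
2\varepsilon \int_0^\varepsilon f_1(x) f_2(x) \, \mathrm{d} x \geq 2 \int_0^\varepsilon f_1(x) \, \mathrm{d} x \int_0^\varepsilon f_2(x) \, \mathrm{d} x,
\]
which is exactly the case $n = 2$.

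For the inductive step, suppose the inequality holds for $n-1$ factors. The crucial observation is that if $f_1, \ldots, f_{n-1}$ are positive and all increasing (respectively all decreasing), then their product $F := f_1 \cdots f_{n-1}$ is again positive and monotone in the same direction. Hence $F$ and $f_n$ satisfy the hypotheses of the $n=2$ case, giving
\[
\varepsilon \int_0^\varepsilon F(x) f_n(x) \, \mathrm{d} x \geq \int_0^\varepsilon F(x) \, \mathrm{d} x \int_0^\varepsilon f_n(x) \, \mathrm{d} x.
\]
Multiplying both sides by $\varepsilon^{n-2}$ and applying the induction hypothesis to $\varepsilon^{n-2} \int_0^\varepsilon F(x) \, \mathrm{d} x$ to replace it by $\prod_{k=1}^{n-1} \int_0^\varepsilon f_k(x) \, \mathrm{d} x$ finishes the induction.

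I do not expect serious obstacles: the only substantive ingredient is the symmetrisation identity for $n=2$, and the inductive step is essentially bookkeeping, relying on the elementary fact that monotonicity is preserved under products of positive functions. The hypothesis of positivity is used precisely here, to ensure that the induction stays within the class to which the base case applies.
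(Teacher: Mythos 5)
Your proposal is correct and follows essentially the same route as the paper: the symmetrisation identity $(f_1(x)-f_1(y))(f_2(x)-f_2(y))\geq 0$ integrated over the square gives the $n=2$ case, and the induction closes because a product of positive functions that are all monotone in the same direction is again monotone. The paper's proof is exactly this, including the observation that positivity is used only to keep the product within the monotone class for the inductive step.
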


\begin{proof}
We have that
\[
f_1 (x) \geq f_1 (y) \quad \Longleftrightarrow \quad f_2 (x) \geq f_2 (y),
\]
since $f_1$ and $f_2$ are either both increasing or both
decreasing. Hence
\[
(f_1 (x) - f_1 (y))(f_2 (x) - f_2 (y)) \geq 0
\]
for all $x$ and $y$, and therefore
\[
\int_0^\varepsilon \int_0^\varepsilon (f_1 (x) - f_1 (y))(f_2 (x) -
f_2 (y)) \, \mathrm{d}x \, \mathrm{d}y \geq 0.
\]
This yields that
\[
\varepsilon \int_0^\varepsilon f_1 (x) f_2 (x) \, \mathrm{d}x \geq
\int_0^\varepsilon f_1 (x) \, \mathrm{d}x \int_0^\varepsilon f_2 (x)
\, \mathrm{d} x.
\]

So far, we have not used that the functions are positive, but this
will be used in the following step. Any product of the functions $f_1,
\ldots, f_n$ is monotone, and the proof is now finished by induction.
\end{proof}

We are now ready to prove \eqref{eq:diverges}. Put
\[
f_k (t) = \frac{1 - l_k - \min\{l_k, t\}}{(1 - l_k)^2}, \qquad k = 1, 2, \ldots
\]
When $l_k < \varepsilon$, a direct calculation shows that
\begin{equation} \label{eq:integrals}
  \int_0^\varepsilon f_k (t) \, \mathrm{d}t = \frac{ \frac{1}{2} l_k^2 +
  \varepsilon - 2 \varepsilon l_k}{(1 - l_k)^2}, \qquad k = 1, 2, \ldots
\end{equation}

We consider the function
\[
g_\varepsilon (x) = \frac{1}{\varepsilon} \frac{ \frac{1}{2} x^2 +
  \varepsilon - 2 \varepsilon x}{(1 - x)^2}.
\]
One easily checks that $g_\varepsilon (0) = 1$, $g_\varepsilon' (0) =
0$, and $g_\varepsilon'' (0) = \frac{1 - 2 \varepsilon}{\varepsilon}$.
Hence, we have that
\begin{equation} \label{eq:estimate}
g_\varepsilon (x) = 1 + \frac{1 - 2 \varepsilon}{2 \varepsilon} x^2 + o
(x^2).
\end{equation}

Since the functions $f_1, f_2, \ldots$ are all positive, it is
sufficient to prove \eqref{eq:diverges} for small $\varepsilon$. Hence
we may and will assume that $\varepsilon < \frac{1}{2}$ so that the
coefficient $\frac{1 - 2 \varepsilon}{2 \varepsilon}$ in
\eqref{eq:estimate} is positive.

Let $m$ be such that $l_k < \varepsilon$ for all $k > m$.  By
Lemma~\ref{lem:inequality} and \eqref{eq:integrals} we have for any $n
> m$ that
\[
\int_0^\varepsilon \prod_{k=1}^n f_k (t) \, \mathrm{d} t \geq
\varepsilon^{-m} \prod_{k=1}^{m-1} \int_0^\varepsilon f_k (t) \,
\mathrm{d} t \prod_{k=m}^n g_\varepsilon (l_k) = C \exp \biggl(
\sum_{k=m}^n \log g_\varepsilon (l_k) \biggr),
\]
where the positive constant $C$ does not depend on $n$. By
\eqref{eq:estimate} and $\sum_{k=1}^\infty l_k^2 = \infty$, we
conclude that \eqref{eq:diverges} holds.

\end{document}